\newtheorem{theorem}{Theorem}[section]
\newtheorem{lemma}[theorem]{Lemma}
\newtheorem{definition}[theorem]{Definition}
\newtheorem{remark}[theorem]{Remark}
\newtheorem{proposition}[theorem]{Proposition}
\numberwithin{equation}{section}
\newenvironment{proof}[1][Proof]{\noindent\textbf{#1.} }{\hfill $\Box$}
 \makeatletter\setlength{\textwidth}{15.0cm}
\begin{document}

\author{Guo Lin\thanks{E-mail: ling@lzu.edu.cn.} \\
{School of Mathematics and Statistics, Lanzhou University,}\\
{Lanzhou, Gansu 730000, People's Republic of China}}
\title{\textbf{Asymptotic Spreading Fastened by Inter-Specific Coupled Nonlinearities:
a Cooperative System}}\maketitle

\begin{abstract}
This paper is concerned with the asymptotic spreading of a
Lotka-Volterra cooperative system. By using the theory of asymptotic spreading of nonautonomous equations, the asymptotic speeds of spreading of unknown functions formulated by a coupled system are estimated.
Our results imply that the asymptotic spreading of one species can
be significantly fastened by introducing a mutual species, which
indicates the role of cooperation described by the coupled
nonlinearities.

\textbf{Keywords}: Comparison principle; coupled nonlinearity;
nonautonomous equation; complete spreading.

\textbf{AMS Subject Classification (2010)}:  35C07, 35K57, 37C65.
\end{abstract}

\begin{center}
Accepted by Phisica D with DOI: 10.1016/j.physd.2011.12.007
\end{center}

\newpage

\section{Introduction}
\noindent

In this paper, we consider the propagation of the following
 diffusion system
\begin{equation}
\begin{cases}
\frac{\partial u_1(t,x)}{\partial t}=d_1\Delta
u_1(t,x)+r_1u_1(t,x)\left[
1-u_1(t,x)+b_1u_2(t,x)\right] , \\
\frac{\partial u_2(t,x)}{\partial t}=d_2\Delta
u_2(t,x)+r_2u_2(t,x)\left[ 1-u_2(t,x)+b_2u_1(t,x)\right],
\end{cases}\label{0}
\end{equation}
in which $u_1(t,x),u_2(t,x)$ denote the densities of two
collaborators at time $t>0$ and location $x\in \mathbb{R}$ in population dynamics,   all the parameters are positive
and $b_1b_2<1$ such that \eqref{0} has four spatial homogeneous
steady states (for short, four equilibria)
$$(0,0),\left(1,0\right), \left(0,1\right)$$
and $K=(k_1,k_2)$ defined by
\[
(k_1,k_2)=\left(\frac{1+b_1}{1-b_1b_2},\frac{1+b_2}{1-b_1b_2}
\right).
\]
It is well known that $(k_1,k_2)$ is asymptotic stable while $(0,0),\left(1,0\right), \left(0,1\right)$ are unstable in the corresponding spatial homogeneous system of \eqref{0}.

Recently, Li et al. \cite{liwe} have investigated the
traveling wavefronts of \eqref{0} by using the theory established by Weinberger
et al. \cite{weinberger}, and the authors proved that the minimal wave
speed of traveling wavefronts of \eqref{0} can be linearly
determinate (see \cite{bo,moli}). In the modeling of population invasions (see Shigesada and  Kawasaki \cite{shi} for many important historic records), the linear determinacy
indicates that the minimal wave speed can be formulated by the
parameters appearing in the system linearized at the invadable
equilibrium which often is unstable in the corresponding kinetic
system. In population dynamics, besides the minimal wave speeds of
traveling wavefronts, the asymptotic speeds of spreading may also be linearly determinate, especially for
the \emph{scalar} equations, we refer to Aronson and Weinberger
\cite{aron1}, van den Bosch \cite{bo}, Diekmann
\cite{diekmann,diek}, Hsu and Zhao \cite{hsuzhao}, Lui
\cite{lui1,lui2}, Mollison \cite{moli}, Thieme \cite{th1}, Thieme
and Zhao \cite{thieme} for some examples.

However, the nonlinearities in equations/systems often give
expression to the inter- or intra-specific actions in population
dynamics. Intuitively, the effect of nonlinearities should be
reflected by many dynamical properties including the asymptotic speeds of spreading. Namely, the linear determinacy of minimal wave speed of traveling wavefronts and
asymptotic speeds of spreading cannot be true for all nonlinear models.  For autonomous scalar equations, a famous counter example of linear determinacy is
\begin{equation}\label{e}
\frac{\partial u(t,x)}{\partial t}=\Delta u (t,x)
+u(t,x)(1-u(t,x))(1+\nu u(t,x)),
\end{equation}
where $\nu>-1$ is a constant that does not appear in the following linearized system
\[
\frac{\partial u(t,x)}{\partial t}=\Delta u (t,x)
+u(t,x),
 \]
and we refer to Hadeler and Rothe \cite{hr} for
precise results on its asymptotic speed of spreading,  which is not linearly
determinate for  $\nu>2$. Moreover, some results on asymptotic spreading have also been obtained for coupled diffusion systems with multi equilibria, which formulates the role of
inter-specific coupled nonlinearities, see Lin
et al. \cite{llr1} and  Weinberger et al. \cite{wll} for two examples of
integral-difference equations, Lin \cite{lin} for a predator-prey reaction-diffusion system.

For reader's convenience, we first give the following definition.
\begin{definition}
\label{d1.1}{\rm Assume that $u(t,x)$ is a nonnegative function for
$x\in \mathbb{R},$ $t>0$. Then $c_{*}$ is called the
\textbf{asymptotic speed of spreading} of $u(t,x)$ if
\begin{description}
\item[a)] $\lim_{t\rightarrow \infty}
\sup_{|x|>(c_*+\epsilon)t}u(t,x)=0$ for any given $\epsilon >0$;
\item[b)] $\liminf_{t\rightarrow \infty}
\inf_{|x|<(c_*-\epsilon)t}u(t,x)>0$ for any given $\epsilon \in
(0,c_*)$.
\end{description}}
\end{definition}
Clearly, the asymptotic speed of spreading states the
observed phenomena if an observer were to move to the right or left
at a fixed speed \cite{weinberger}. Biologically, it also describes
the speed at which the geographic range of the new population
expands \cite{hsuzhao}. Therefore, it becomes a very important index formulating
the spatial propagation of ecological communities. At the same time, it is possible that the asymptotic speed of spreading of a nonnegative
function is not a positive constant in the above limit sense, see Berestycki et al. \cite{be2} for some examples. When the asymptotic speed of spreading is not a constant, its lower bounds and upper bounds in \cite[Section 1.8]{be} are still useful because these can describe and estimate the success of biological invasions.

If an irreducible cooperative system has just two equilibria in
the interesting interval, it is very likely that all the unknown
functions have the same asymptotic speed of spreading coincided with the linear determinacy, see some results by Liang
and Zhao \cite{liang}, Lui \cite{lui1,lui2}. In particular, when \eqref{0} is concerned, Li et al. \cite[Example 4.1]{liwe} studied the propagation modes when one species is the aboriginal and the other is the invader, namely, the
interesting interval is
\[[1,k_1]\times [0,k_2 ] \text{ or }[0,k_1]\times [1,k_2],\] on which the system has \emph{no} other equilibria, and can also be studied by \cite{liang,lui1,lui2}.

In this paper, we consider the asymptotic spreading of \eqref{0}
when both species are invaders, namely, $(0,0)$ will be the invadable equilibrium and the interesting interval
will be $[0,k_1]\times [0,k_2],$ on which \eqref{0} has four equilibria
such that we cannot use the theory of \cite{liang,lui1,lui2}.
To obtain some estimates on asymptotic spreading, the abstract results
developed by Berestycki et al. \cite{be} will be applied, and the lower bounds of
asymptotic speeds of spreading will be estimated. More precisely, we first give some properties of $u_1,$ then we regard the second equation of  \eqref{0} as a nonautonomous equation and establish some conclusions by \cite{be}. Our
results imply that: (1) The nonlinearities described the inter-specific actions may play an important role
in asymptotic spreading such that the asymptotic spreading of one species is faster than the case that the inter-specific actions disappear; (2) It is necessary to use different
indices to formulate  the asymptotic spreading of each unknown functions
if the system has multi equilibria. Moreover, our results answer the nonexistence of traveling wave solutions of \eqref{0}, which also develops the theory of traveling wave solutions in Lin et al. \cite{llm}.

In Section 2, we shall give some preliminaries, including a classical conclusion of Fisher equation and an
important result established by Berestycki et al. \cite{be}. Then we
shall show some estimates on the asymptotic spreading of \eqref{0}
if both species are invaders, which are also applied to the study of the corresponding traveling wave solutions. In the last section, further
discussion is provided to illustrate our conclusions.

\section{Preliminaries}
\noindent

We first present some results of the following Fisher's equation
\begin{equation}\label{1}
\begin{cases}
\frac{\partial z(t,x)}{\partial t}=d \Delta z(t,x)+rz (t,x)\left[
1-z(t,x)/K\right] ,\\
z(0,x)=z(x),
\end{cases}
\end{equation}
in which all the parameters are positive and  $z(x)>0$ is a uniform
 continuous and bounded function. Due to the theory of asymptotic spreading
established by Aronson and Weinberger \cite{aron1}, we have the following result.
\begin{lemma}\label{l1.1}
Assume that $z(t,x)$ is defined by \eqref{1} and $\epsilon \in (0, 2\sqrt{dr})$ holds. Then  \[\lim_{t\to
\infty}\inf_{ |x|< (2\sqrt{dr}-\epsilon)t}z(t,x)=K.\]
 Moreover, if $z(x)$ admits compact support, then
 \[\lim_{t\to
\infty}\sup_{ |x|> (2\sqrt{dr}+\epsilon)t}z(t,x)=0.\]
\end{lemma}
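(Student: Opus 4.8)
The plan is to reduce \eqref{1} to the normalized Fisher equation and then run a two-sided comparison argument, as in Aronson and Weinberger \cite{aron1}. First I would set $w = z/K$, which transforms \eqref{1} into $w_t = d w_{xx} + r w(1-w)$ with carrying capacity normalized to $1$ and initial datum $w(0,\cdot) = z(\cdot)/K > 0$. It therefore suffices to prove that the normalized problem spreads with speed $2\sqrt{dr}$; the assertions for $z$ follow by multiplying back by $K$, with $w\to 1$ corresponding to $z\to K$.

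For the upper bound (the second assertion, which uses compact support), I would exploit the KPP structure $rw(1-w)\le rw$. The linearized equation $\bar{w}_t = d\bar{w}_{xx} + r\bar{w}$ admits exponential solutions $e^{-\lambda(x-ct)}$ precisely when $c = d\lambda + r/\lambda$, whose minimum over $\lambda>0$ equals $2\sqrt{dr}$, attained at $\lambda=\sqrt{r/d}$. Fixing $c = 2\sqrt{dr}+\epsilon$ and choosing $\lambda$ in the range where $d\lambda^2 - c\lambda + r \le 0$, the function
\[
\bar{w}(t,x) = M e^{-\lambda(|x| - ct)}
\]
is a weak supersolution of the nonlinear equation: away from $x=0$ this is the standard exponential computation, while the concave corner of $|x|$ at the origin only reinforces the supersolution inequality. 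Since $w(0,\cdot)$ has compact support, $M$ can be chosen so that $\bar{w}(0,x)\ge w(0,x)$ for all $x$, and the parabolic comparison principle gives $w(t,x)\le\bar{w}(t,x)$. On the region $|x|>(2\sqrt{dr}+\epsilon)t$ one has $\bar{w}(t,x)\to 0$ uniformly as $t\to\infty$, which yields the claimed limsup.

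The lower bound (the first assertion) is the main obstacle and requires a moving subsolution propagating at a speed arbitrarily close to $2\sqrt{dr}$. I would proceed in two stages. Since $z(x)>0$ everywhere, the strong maximum principle shows $w$ is bounded below by a positive constant on every compact set after a finite time; equivalently, a stationary subsolution built from the principal Dirichlet eigenfunction of $d\phi'' + r\phi = \mu\phi$ on a sufficiently long interval (where the principal eigenvalue becomes positive) realizes the hair-trigger effect for the monostable nonlinearity $rw(1-w)$ with $f'(0)=r>0$. For the spreading itself, fix $c = 2\sqrt{dr}-\epsilon < 2\sqrt{dr}$ and construct a compactly supported travelling-bump subsolution adapted to $c$; the delicate point is to take its amplitude small enough that the quadratic term $-rw^2$ cannot destroy the subsolution inequality, while keeping the bump positive on an expanding interval. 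Comparison then forces $\liminf w$ to be bounded below by a positive constant on $|x|<ct$, and a standard squeezing iteration between this subsolution and the stable equilibrium $1$ upgrades the bound to $w\to 1$, i.e. $z\to K$, uniformly on $|x|<(2\sqrt{dr}-\epsilon)t$. As $\epsilon\in(0,2\sqrt{dr})$ is arbitrary, the first limit follows.
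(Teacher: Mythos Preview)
The paper does not actually prove this lemma: it is stated as a classical consequence of Aronson and Weinberger \cite{aron1} and used as a black box. Your sketch is a faithful reconstruction of that classical argument (normalize to carrying capacity $1$, linear supersolution for the outer estimate, compactly supported moving subsolution plus hair-trigger for the inner estimate), so in spirit you are doing exactly what the paper defers to.

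One small slip is worth flagging. In the upper-bound step you set the supersolution speed equal to the threshold $c=2\sqrt{dr}+\epsilon$ and then claim $\bar w(t,x)=Me^{-\lambda(|x|-ct)}\to 0$ uniformly on $\{|x|>(2\sqrt{dr}+\epsilon)t\}$. That is false as stated: at the boundary $|x|=ct$ one has $\bar w\equiv M$, so the supremum over $|x|>ct$ is $M$ for all $t$. The standard fix is to take the supersolution speed to be some $c'\in(2\sqrt{dr},\,2\sqrt{dr}+\epsilon)$; then on $|x|>(2\sqrt{dr}+\epsilon)t$ the exponent satisfies $|x|-c't>(\epsilon-(c'-2\sqrt{dr}))t\to\infty$, and the decay follows. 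With that adjustment the outline is correct.
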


For \eqref{1}, the following comparison principle is also true (see
Ye and Li \cite{yeli}).
\begin{lemma}\label{com}
Assume that $\overline{z}(t,x)\ge 0, x\in \mathbb{R}, t>0,$
satisfies
\begin{equation*}
\begin{cases}
\frac{\partial \overline{z}(t,x)}{\partial t}\ge (\le) d \Delta
\overline{z}(t,x)+r\overline{z} (t,x)\left[
1-\overline{z}(t,x)/K\right] ,\\
\overline{z}(0,x)\ge (\le)z(x).
\end{cases}
\end{equation*}
Then $\overline{z}(t,x) \ge (\le){z}(t,x),$ where $z(t,x)$ is
defined by \eqref{1}.
\end{lemma}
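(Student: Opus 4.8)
The plan is to establish the supersolution case, in which $\overline{z}$ satisfies the differential inequality with ``$\ge$'' and $\overline{z}(0,x)\ge z(x)$; the subsolution case is identical after reversing every inequality (equivalently, after exchanging the roles of $\overline{z}$ and $z$). Writing $f(s)=rs(1-s/K)$ and setting the difference $w(t,x)=\overline{z}(t,x)-z(t,x)$, I would subtract the equation \eqref{1} satisfied by $z$ from the inequality satisfied by $\overline{z}$ to obtain
\[
\frac{\partial w}{\partial t}\ge d\Delta w+\big[f(\overline{z})-f(z)\big],\qquad w(0,x)\ge 0 .
\]

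Since $f$ is smooth and both $z(t,x)$ and $\overline{z}(t,x)$ are nonnegative and bounded (say by a constant $B$), the next step is to linearize the reaction term by the integral form of the mean value theorem,
\[
f(\overline{z})-f(z)=\Big(\int_0^1 f'\big(z+s(\overline{z}-z)\big)\,ds\Big)\,w=:c(t,x)\,w,
\]
where $c(t,x)$ is measurable and bounded, say $c(t,x)\le M$ with $M:=r+2rB/K$. Thus $w$ is a supersolution of the linear problem $w_t\ge d\Delta w+c(t,x)\,w$ with $w(0,x)\ge 0$. To remove the (possibly positive) zeroth-order coefficient I would pass to $v=e^{-Mt}w$, which satisfies
\[
\frac{\partial v}{\partial t}\ge d\Delta v+\big(c(t,x)-M\big)v,\qquad c(t,x)-M\le 0,\qquad v(0,x)\ge 0,
\]
so that it suffices to prove $v\ge 0$.

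The main obstacle is that the spatial domain is all of $\mathbb{R}$, so the classical maximum principle on a compact set does not apply directly and one must control the behaviour at spatial infinity. I expect this to be the crux, and I would handle it by a Phragm\'en--Lindel\"of barrier. Fix $T>0$ and $\delta>0$, and consider $v_\delta(t,x)=v(t,x)+\delta e^{\lambda t}(1+x^2)$ on the strip $[0,T]\times\mathbb{R}$; a direct computation using $c-M\le 0$ and $\Delta(1+x^2)=2$ shows that, for $\lambda>2d$, the term $\delta e^{\lambda t}(1+x^2)$ is a \emph{strict} supersolution, hence so is $v_\delta$, while $v_\delta(0,x)\ge 0$. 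Because $v$ is bounded whereas $\delta e^{\lambda t}(1+x^2)\to+\infty$ as $|x|\to\infty$ uniformly on $[0,T]$, if $v_\delta$ were negative somewhere its negative infimum would be attained at an interior point $(t_0,x_0)$ with $t_0>0$; there one has $\partial_t v_\delta\le 0$, $\Delta v_\delta\ge 0$ and $(c-M)v_\delta\ge 0$, which contradicts the strict supersolution inequality. Therefore $v_\delta\ge 0$ on $[0,T]\times\mathbb{R}$; letting $\delta\to 0$ gives $v\ge 0$, and since $T>0$ is arbitrary we conclude $w=\overline{z}-z\ge 0$ for all $t>0$, that is, $\overline{z}(t,x)\ge z(t,x)$, as claimed.
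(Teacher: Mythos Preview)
The paper does not actually prove this lemma: it is stated with the parenthetical reference ``(see Ye and Li \cite{yeli})'' and no argument is given. So there is no ``paper's own proof'' to compare against; you have supplied what the paper simply imports from a textbook.

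Your argument is the standard Phragm\'en--Lindel\"of proof of the parabolic comparison principle on an unbounded domain, and it is correct. Linearising the reaction via the integral mean value theorem to obtain a bounded coefficient $c(t,x)$, killing its positive part with the weight $e^{-Mt}$, and then using the barrier $\delta e^{\lambda t}(1+x^2)$ with $\lambda>2d$ to force an interior negative minimum and derive a contradiction is exactly the right mechanism. The contradiction step is clean once one writes the linear operator $Lu=\partial_t u-d\Delta u-(c-M)u$: you have $Lv_\delta>0$ everywhere, while at a negative interior minimum $\partial_t v_\delta\le 0$, $\Delta v_\delta\ge 0$, and $(c-M)v_\delta\ge 0$, giving $Lv_\delta\le 0$.

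One small point worth making explicit: your proof uses that $\overline{z}$ is \emph{bounded} on each strip $[0,T]\times\mathbb{R}$, both to ensure $c(t,x)$ is bounded and so that the quadratic barrier dominates $v$ at spatial infinity. The lemma as stated in the paper does not say this in so many words (it only assumes $\overline{z}\ge 0$), but the paper works throughout in the class of bounded, uniformly continuous functions, and the cited reference \cite{yeli} states the comparison principle under such hypotheses. It would be harmless, and slightly more honest, to add ``and bounded'' to the hypotheses when you write up the proof.
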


For the system \eqref{0}, we also give the following comparison
principle (one also refers to Pao \cite{pao}, Smoller \cite{smoller},
Ye and Li \cite{yeli} for more details).
\begin{lemma}\label{lecom}
Let $(u_1(t,x),u_2(t,x))$ be defined by
\begin{equation*}
\begin{cases}
\frac{\partial u_1(t,x)}{\partial t}=d_1\Delta
u_1(t,x)+r_1u_1(t,x)\left[
1-u_1(t,x)+b_1u_2(t,x)\right] , \\
\frac{\partial u_2(t,x)}{\partial t}=d_2\Delta
u_2(t,x)+r_2u_2(t,x)\left[ 1-u_2(t,x)+b_2u_1(t,x)\right],\\
u_1(0,x)=u_1(x), u_2(0,x)=u_2(x),
\end{cases}
\end{equation*}
where $u_1(x)>0, u_2(x)>0$ are uniformly continuous and bounded.
If $(z_1(t,x),z_2(t,x))\ge (0,0)$ is uniformly continuous and
bounded for $(t,x)\in (0,+\infty)\times \mathbb{R}$ and satisfies
\begin{equation*}
\begin{cases}
\frac{\partial z_{1}(t,x)}{\partial t}\geq \left( \leq \right)
d_{1}\Delta
z_{1}(t,x)+r_{1}z_{1}(t,x)\left[ 1-z_{1}(t,x)+b_{1}z_{2}(t,x)\right] ,\\
\frac{%
\partial z_{2}(t,x)}{\partial t}\geq \left( \leq \right) d_{2}\Delta
z_{2}(t,x)+r_{2}z_{2}(t,x)\left[ 1-z_{2}(t,x)+b_{2}z_{1}(t,x)\right]
,\\
z_{1}(0,x)\geq \left( \leq \right) u_{1}(x),z_{2}(0,x)\geq \left(
\leq \right) u_{2}(x).
\end{cases}
\end{equation*}
Then $(z_1(t,x),z_2(t,x))\ge \left( \leq \right)
(u_1(t,x),u_2(t,x)).$
\end{lemma}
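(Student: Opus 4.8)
The plan is to prove the statement by reducing the pair of differential inequalities to a single \emph{cooperative} linear parabolic system for the difference $w_i := z_i - u_i$ ($i=1,2$) and then running a positivity argument. I treat the supersolution case (all inequalities read $\geq$); the subsolution case is identical after replacing $w_i$ by $u_i - z_i$, so it suffices to show $w_i \geq 0$. Writing $f_1(p_1,p_2) = r_1 p_1(1 - p_1 + b_1 p_2)$ and $f_2(p_1,p_2) = r_2 p_2(1 - p_2 + b_2 p_1)$, I subtract the equation for $u_i$ from the inequality for $z_i$ and represent the difference of the reaction terms in mean-value (integral) form,
\[
f_i(z_1,z_2) - f_i(u_1,u_2) = a_{i1}(t,x)\, w_1 + a_{i2}(t,x)\, w_2, \qquad a_{ij} = \int_0^1 \partial_{p_j} f_i\big(u_1+s w_1,\, u_2+s w_2\big)\, ds .
\]
Because $u_i,z_i \geq 0$, the argument $(u_1+sw_1,u_2+sw_2)$ stays nonnegative, so the off-diagonal coefficients satisfy $a_{12} = \int_0^1 r_1 b_1[(1-s)u_1 + s z_1]\,ds \geq 0$ and, symmetrically, $a_{21} \geq 0$. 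This cooperativity (nonnegative off-diagonal coupling) is the structural fact the whole proof rests on. Moreover, since all four functions are uniformly bounded, every $a_{ij}$ is bounded on $(0,\infty)\times\mathbb{R}$.

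Next I remove the sign ambiguity of the diagonal coefficients $a_{11},a_{22}$. Fixing a constant $c \geq \max_i \sup|a_{ii}|$, the differences obey
\[
\frac{\partial w_i}{\partial t} - d_i \Delta w_i + c\, w_i \;\geq\; (a_{ii}+c)\, w_i + a_{i\bar i}\, w_{\bar i}, \qquad \bar i \neq i,
\]
where now $a_{ii}+c$ and $a_{i\bar i}$ are all nonnegative, and $w_i(0,\cdot)\geq 0$. The operator $\partial_t - d_i\Delta + c$ has the positivity-preserving solution operator $S_i(t) = e^{-ct}\, e^{d_i t \Delta}$ (heat semigroup times an exponential), which is well defined and bounded on the space of bounded uniformly continuous functions on $\mathbb{R}$.

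I then pass to the Duhamel formulation. Writing $h_i := \partial_t w_i - d_i\Delta w_i + c\,w_i - (a_{ii}+c)w_i - a_{i\bar i}w_{\bar i} \geq 0$ for the nonnegative slack of the supersolution inequality, variation of constants gives
\[
w_i(t) = S_i(t)\,w_i(0) + \int_0^t S_i(t-s)\big[(a_{ii}+c)\,w_i(s) + a_{i\bar i}\,w_{\bar i}(s) + h_i(s)\big]\,ds .
\]
This exhibits $(w_1,w_2)$ as the unique fixed point of the right-hand map $\Phi$, which on a short interval $[0,\tau]$ (with $\tau$ depending only on $c$, $\sup_{i,j}|a_{ij}|$ and the $d_i$) is a contraction on bounded uniformly continuous functions. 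Since $S_i$ preserves positivity and the data $w_i(0)$, the slacks $h_i$, and the coefficients $a_{ii}+c,\,a_{i\bar i}$ are all nonnegative, $\Phi$ maps the closed positive cone into itself; hence its unique fixed point satisfies $w_i \geq 0$ on $[0,\tau]$. Stepping forward on $[\tau,2\tau],[2\tau,3\tau],\dots$ with the newly obtained nonnegative data then propagates $w_i \geq 0$ to all $t>0$.

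The main obstacle is the unbounded spatial domain $\mathbb{R}$: a naive maximum-principle argument fails because the infimum of $w_i$ need not be attained. Packaging the argument through the explicit positivity-preserving semigroup $S_i$ is precisely what sidesteps this, since the uniform boundedness hypothesis guarantees both that the Duhamel integrals converge and that the contraction constant is uniform in $x$, so no growth barrier at infinity is needed. The remaining checks — that $S_i$ is positivity-preserving and that the time-$\tau$ map is a contraction with $\tau$ independent of the starting time — are routine, and I would only sketch them.
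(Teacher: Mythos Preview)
Your argument is correct and is one of the standard routes to comparison principles for cooperative parabolic systems: linearize the difference of the nonlinearities along the segment joining $(u_1,u_2)$ to $(z_1,z_2)$, observe that the convex combination $(1-s)u_i+s z_i$ stays in the nonnegative cone so the off-diagonal coefficients $a_{12},a_{21}$ are nonnegative, shift the diagonal to enforce $a_{ii}+c\ge 0$, and then run a Duhamel--Picard iteration with the positive kernel $S_i(t)=e^{-ct}e^{d_i t\Delta}$ to trap the fixed point in the positive cone. The uniform boundedness of $u_i,z_i$ gives uniform bounds on the $a_{ij}$ and hence a step size $\tau$ independent of the starting time, so the iteration globalizes.

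The paper, however, does not prove this lemma at all: it simply records it as a standard comparison principle and points to the textbooks of Pao, Smoller, and Ye--Li for details. So there is nothing to compare at the level of argument; what you have written is exactly the kind of proof those references supply (Pao's monograph in particular develops the upper/lower solution machinery for quasimonotone parabolic systems along essentially these lines). In short, your proof is fine and fills in what the paper deliberately leaves to the literature.
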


Now, we consider a nonautonomous equation as follows
\begin{equation}\label{2}
\begin{cases}
\frac{\partial u(t,x)}{\partial t}=d \Delta u(t,x)+f(t,x,u) ,\\
u(0,x)=u(x),
\end{cases}
\end{equation}
in which $f: \mathbb{R}\times \mathbb{R}\times \mathbb{R}^+\to
\mathbb{R}$ is assumed to be of class $C^{\delta/2, \delta}$ in
$(t,x),$ locally in $u$, for a given $\delta \in (0,1).$ Moreover,
$f$ is  also locally Lipschitz continuous in $u$ and of class $C^1$ in
$u\in [0, \beta]$ with $\beta>0$ uniformly with respect to $(t,x)\in
\mathbb{R}\times \mathbb{R},$ it is also supposed that $f(t,x,0)=0.$
Since \eqref{2} cannot generate a semiflow, the study of its asymptotic
spreading is very hard. To formulate its asymptotic spreading, we first present some important
definitions and results given by Berestycki et al. \cite[Section 1.5]{be}.

\begin{definition}\label{d2.1}{\rm
We say that \emph{complete spreading occurs} for a solution $u(t,x)$
of \eqref{2} if there is a function $t \to r(t) > 0 $ such that
$r(t)\to \infty $ as $t \to \infty$ and the family $(B_{r(t)})_{t\ge
0} $ is a family of propagation sets for $u$, that is
\[
\liminf_{t\to \infty}\left\{\inf_{x\in B_{r(t)}}u(t,x)\right\} >0,
\]
where $B_{r}=\{x\in \mathbb{R}: |x|<r\}.$}
\end{definition}

This definition, in fact, gives a description of the success of
spatial spreading/invasion, which is similar to the second item of Definition
\ref{d1.1}. Since there are only two directions in $\mathbb{R}$, we also show a
specific case of Berestycki et al. \cite[Definition 4]{be} as
follows.
\begin{definition}\label{d2.2}{\rm
We say that a family $(r(t))_{t\ge 0}$ of nonnegative real numbers
is \emph{a family of asymptotic spreading radii} for a solution
$u(t,x)$ of \eqref{2} if the family of segments $([-r(t),
r(t)])_{t\ge 0} $ is a family of propagation sets for $u(t,x)$, that
is
\[
\liminf_{t\to \infty}\left\{\inf_{s\in [0, r(t)]}u(t,\pm s)\right\}
>0.
\]
}
\end{definition}

\begin{definition}\label{d2.3}{\rm
We say that a family $(r(t))_{t\ge 0}$ is \emph{a family of
admissible radii} if $(r(t))_{t\ge 0}\in
C^{1+\delta/2}(\mathbb{R}^+,\mathbb{R}^+)$ and $\sup_{t\ge
0}|r'(t)|<  \infty$. }
\end{definition}
\begin{remark}\label{re}{\rm
If $\liminf_{t\to \infty}r(t)/t$ exists and is positive, then it is a lower bounds
of asymptotic  speed of spreading. Such a definition is still
useful because it can describe the phenomena of successful invasion, even if $\lim_{t\to
\infty}r(t)/t$ does not exist, see Berestycki et al. \cite[Section
1.8]{be} for the upper bounds of asymptotic  speed of spreading.}
\end{remark}

For $\phi \in C^{1,2}(\mathbb{R}\times \mathbb{R}),$ define
\[
L \phi=\frac{\partial \phi}{\partial t}- d\triangle \phi-
f'_u(t,x,0)\phi.
\]
Considering the \emph{generalized principal eigenvalue} problem
formulated by
\[
\lambda'_1=\inf\{\lambda \in \mathbb{R}, \exists \phi \in
C^{1,2}(\mathbb{R}\times \mathbb{R}) \bigcap
W^{1,\infty}(\mathbb{R}\times \mathbb{R}), \inf_{\mathbb{R}\times
\mathbb{R}}\phi >0, L \phi \le \lambda\phi\},
\]
then $\lambda'_1<0$ implies that the equilibrium $0$ is unstable and the following
conclusion holds.
\begin{lemma}[\cite{be}]\label{le2.4}
Assume that $\lambda'_1<0$ and there exists $r(t)$ of admissible
radii such that
\[
\liminf_{t\to \infty} u(t, \pm r(t)) >0.
\]
Then
\begin{equation}\label{st}
\liminf_{t\to \infty}\left\{\inf_{|x|\le r(t)}u(t,x)\right\}>0.
\end{equation}
\end{lemma}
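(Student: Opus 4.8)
The plan is to derive the interior lower bound from the boundary lower bound by fitting a small, time-global subsolution of \eqref{2} underneath $u$ on the moving interval $[-r(t),r(t)]$, using the instability encoded by $\lambda_1'<0$ to produce the subsolution and the strong maximum principle to get the comparison started, even though nothing is known about $u$ in the interior at the initial instant.

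First I would convert $\lambda_1'<0$ into a usable subsolution. Choosing $\lambda\in(\lambda_1',0)$, the definition of the generalized principal eigenvalue furnishes $\phi\in C^{1,2}(\mathbb{R}\times\mathbb{R})\cap W^{1,\infty}(\mathbb{R}\times\mathbb{R})$ with $\inf_{\mathbb{R}\times\mathbb{R}}\phi>0$ and $L\phi\le\lambda\phi$. Writing $f(t,x,s)=f'_u(t,x,0)s+g(t,x,s)$ with $|g(t,x,s)|\le\omega(|s|)\,|s|$, where $\omega(\rho)\to0$ as $\rho\to0^+$ uniformly in $(t,x)$ (from the $C^1$-in-$u$ hypothesis on $[0,\beta]$), a direct computation gives, for any constant $\epsilon>0$,
\[
\partial_t(\epsilon\phi)-d\Delta(\epsilon\phi)-f(t,x,\epsilon\phi)
=\epsilon L\phi-g(t,x,\epsilon\phi)
\le\epsilon\phi\bigl[\lambda+\omega(\epsilon\|\phi\|_{\infty})\bigr].
\]
Since $\lambda<0$ and $\omega(\rho)\to0$, there is $\epsilon_0>0$ with $\epsilon_0\|\phi\|_\infty\le\beta$ such that $\epsilon\phi$ is a subsolution of \eqref{2} for every $\epsilon\in(0,\epsilon_0]$, and $\inf(\epsilon\phi)=\epsilon\inf\phi>0$ for each such $\epsilon$. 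This is exactly where the instability $\lambda_1'<0$ enters.

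Next I would set up the comparison on the moving domain $D_T=\{(t,x):t\ge T,\ |x|\le r(t)\}$. Fix $\kappa$ positive and below the liminf of $u(t,\pm r(t))$, and choose $T$ large so that $u(t,\pm r(t))\ge\kappa$ for all $t\ge T$. The only real obstacle is that nothing is known about $u(T,\cdot)$ in the interior of $[-r(T),r(T)]$. I resolve this with the strong maximum principle: since $u\ge0$ is a nontrivial solution of \eqref{2} with $f(t,x,0)=0$ (it is positive at the boundary for large $t$), we have $u(t,x)>0$ for all $t>0$, whence $\eta_T:=\min_{|x|\le r(T)}u(T,x)>0$ by continuity and compactness of the cross-section. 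Setting $\epsilon'=\min\{\eta_T/\|\phi\|_\infty,\ \kappa/\|\phi\|_\infty,\ \epsilon_0\}>0$, we get $\epsilon'\phi(T,\cdot)\le\epsilon'\|\phi\|_\infty\le\eta_T\le u(T,\cdot)$ on the initial slice and $\epsilon'\phi(t,\pm r(t))\le\epsilon'\|\phi\|_\infty\le\kappa\le u(t,\pm r(t))$ on the lateral boundary; that is, $u\ge\epsilon'\phi$ on the whole parabolic boundary of $D_T$.

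Finally I would invoke the comparison principle on $D_T$. The difference $w=u-\epsilon'\phi$ satisfies a linear parabolic inequality $w_t-d\Delta w\ge c(t,x)w$ with bounded coefficient $c(t,x)=\int_0^1 f'_u(t,x,\epsilon'\phi+\tau w)\,d\tau$ and is nonnegative on the parabolic boundary, so $w\ge0$ throughout $D_T$; because the cross-sections $[-r(t),r(t)]$ are compact and $r$ is admissible (Lipschitz lateral boundary, $\sup|r'|<\infty$), the moving boundary causes no difficulty. Hence $u(t,x)\ge\epsilon'\phi(t,x)\ge\epsilon'\inf\phi$ for all $t\ge T$ and $|x|\le r(t)$, giving $\liminf_{t\to\infty}\inf_{|x|\le r(t)}u(t,x)\ge\epsilon'\inf\phi>0$, which is \eqref{st}. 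The main thing to watch is that $\epsilon'$ must be small enough simultaneously to keep $\epsilon'\phi$ a subsolution and to sit below both the interior floor $\eta_T$ and the boundary level $\kappa$; since the conclusion only requires strict positivity of the limit, shrinking $\epsilon'$ as needed costs nothing.
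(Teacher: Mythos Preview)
The paper does not prove this lemma at all: it is stated with the attribution \cite{be} and simply quoted from Berestycki, Hamel and Nadin, so there is no ``paper's own proof'' to compare against. Your argument is essentially the one given in \cite{be}: turn the condition $\lambda_1'<0$ into a small global positive subsolution $\epsilon\phi$ of the full nonlinear equation, use strict positivity of $u$ at a fixed finite time $T$ to sit this subsolution underneath $u$ on the initial slice and on the lateral boundary $x=\pm r(t)$, and then run the parabolic comparison on the moving cylinder $\{t\ge T,\ |x|\le r(t)\}$.

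Two small points you may want to tighten. First, when you write ``choosing $\lambda\in(\lambda_1',0)$, the definition \ldots\ furnishes $\phi$'', note that the set in the definition of $\lambda_1'$ is upward-closed in $\lambda$ because $\phi>0$, so any $\lambda>\lambda_1'$ indeed lies in that set; it is worth saying this explicitly. Second, the comparison step uses that the incremental coefficient $c(t,x)=\int_0^1 f'_u(t,x,\epsilon'\phi+\tau w)\,d\tau$ is bounded from above, which needs the arguments $\epsilon'\phi+\tau w$ to stay in the range where $f'_u$ is uniformly bounded; this is harmless here since $\epsilon'\phi$ is bounded by $\beta$ and one only needs an upper bound on $c$ for the one-sided comparison, but it is the one place where a reader could ask for a word of justification. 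Otherwise your sketch is sound and matches the source.
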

\begin{lemma}[\cite{be}]\label{le2.5}
Let $u(t,x)$ be the solution of the Cauchy problem \eqref{2}
associated with an initial datum $u(x) >0.$ Assume that $\lambda'_1<0$ holds and there exists $r(t)$ of admissible
radii such that
\begin{equation}\label{9}
\liminf_{R\to \infty}\left\{\liminf_{t\to
\infty}\left\{\inf_{|x|<R}(4d f_u'(t, x\pm
r(t),0)-(r'(t))^2)\right\}\right\}>0.
\end{equation}
 Then \eqref{st}
holds for $u(t,x).$
\end{lemma}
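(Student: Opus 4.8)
The plan is to derive the conclusion from Lemma~\ref{le2.4}. That lemma already turns $\liminf_{t\to\infty}u(t,\pm r(t))>0$ into \eqref{st}, so the entire task reduces to producing a positive lower bound for the solution at the two moving points $x=\pm r(t)$. I would obtain this bound by placing beneath $u$ a localized subsolution that is transported with the prescribed radius $r(t)$ and stays uniformly away from $0$ at its centre. Condition \eqref{9} is exactly what makes such a subsolution available: written out, it says that in a window of fixed width around $\pm r(t)$ one has $4d\,f_u'(t,x\pm r(t),0)>(r'(t))^2$ for all large $t$, i.e. the boundary moves strictly slower than the local linear KPP speed $2\sqrt{d\,f_u'(t,\cdot,0)}$, so it cannot outrun the population it is chasing.

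First I would pass to the moving frame. For the right point set $y=x-r(t)$ and $v(t,y)=u(t,y+r(t))$, so that \eqref{2} becomes $v_t=d\,v_{yy}+r'(t)v_y+f(t,y+r(t),v)$; the left point is identical after $y=x+r(t)$. Since $f(t,x,0)=0$ and $f$ is $C^1$ in $u$ near $0$ uniformly in $(t,x)$, for every $\sigma>0$ there is $\rho>0$ with $f(t,x,u)\ge (f_u'(t,x,0)-\sigma)u$ for $0\le u\le\rho$. Hence, as long as the subsolution stays below $\rho$, it suffices to subsolve the linear inequality $v_t\le d\,v_{yy}+r'(t)v_y+(c(t,y)-\sigma)v$, where $c(t,y):=f_u'(t,y+r(t),0)$. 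Using the nested $\liminf$ in \eqref{9} I would fix $R$, $\delta_0>0$ and $T$ so that $4d\,c(t,y)-(r'(t))^2\ge 2\delta_0$ whenever $|y|<R$ and $t\ge T$, and then choose $\sigma$ small.

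To see why \eqref{9} is the right threshold, try the separated profile $\underline{u}(t,x)=\epsilon\,\phi(x-r(t))$ with $\phi\ge 0$ supported in $[-R,R]$. In the frame this is $v=\epsilon\phi(y)$, for which the required inequality reads $d\,\phi''+r'(t)\phi'+(c(t,y)-\sigma)\phi\ge0$. Removing the drift by the gauge change $\phi=e^{-\mu y}\psi$ with $\mu=r'(t)/(2d)$ turns this into $d\,\psi''+\bigl(c(t,y)-\tfrac{(r'(t))^2}{4d}-\sigma\bigr)\psi\ge0$, and the bound just extracted gives $c-\tfrac{(r')^2}{4d}\ge\tfrac{\delta_0}{2d}$. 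Thus the zeroth-order coefficient is strictly positive, so for $R$ large the principal Dirichlet eigenvalue of $-d\,\partial_{yy}-\bigl(c-\tfrac{(r')^2}{4d}-\sigma\bigr)$ on $(-R,R)$ is negative and its positive eigenfunction supplies $\psi$. With $\epsilon$ small enough to keep $\underline{u}\le\rho$ and $\underline{u}(t_0,\cdot)\le u(t_0,\cdot)$ at a starting time $t_0\ge T$ (using $u>0$ and parabolic regularity for a strictly positive lower bound on the compact support), a comparison argument of the type of Lemma~\ref{com} would give $u\ge\underline{u}$ for $t\ge t_0$, whence $\liminf_{t\to\infty}u(t,r(t))\ge\epsilon\,\phi(0)>0$, and symmetrically at $-r(t)$.

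The hard part is that $r'(t)$ genuinely depends on $t$, and admissibility supplies $\sup_t|r'(t)|<\infty$ together with $r\in C^{1+\delta/2}$ but no control on $r''$. Consequently the matching gauge $\mu=r'(t)/(2d)$ is itself time-dependent, so $\underline{u}(t,x)=\epsilon e^{-\mu(t)(x-r(t))}\psi(x-r(t))$ is no longer a stationary profile in the frame and picks up an extra, a priori unbounded, term proportional to $\mu'(t)$; on the other hand a single fixed profile robust to every admissible value of $r'(t)$ fails the subsolution inequality near the edge of its support, where $\phi\to0$ kills the stabilizing zeroth-order term. Resolving this is where the uniform margin $2\delta_0$ in \eqref{9} and the H\"older-in-$t$ regularity of the coefficients must be used in full, replacing the naive stationary eigenfunction by the non-autonomous principal-eigenvalue construction of Berestycki et al.~\cite{be}; once that lower bound at $\pm r(t)$ is secured, Lemma~\ref{le2.4} closes the argument.
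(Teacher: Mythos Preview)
The paper does not prove Lemma~\ref{le2.5} at all: the bracketed citation in the lemma header signals that the result is quoted directly from Berestycki, Hamel and Nadin~\cite{be}, and no proof is supplied in the present paper. So there is no ``paper's own proof'' to compare your proposal against.

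That said, your sketch is a fair outline of the strategy actually used in~\cite{be}: reduce to Lemma~\ref{le2.4} by establishing $\liminf_{t\to\infty}u(t,\pm r(t))>0$, pass to the moving frame, linearize near $u=0$, and construct a compactly supported subsolution from a principal eigenfunction once the gauge transformation has absorbed the drift. You correctly identify that condition~\eqref{9} is precisely what forces the zeroth-order coefficient $c(t,y)-\tfrac{(r'(t))^2}{4d}$ to be uniformly positive on a window of fixed width, and you are honest that the time dependence of $r'(t)$ makes the naive stationary profile fail, so that one must invoke the non-autonomous generalized-eigenvalue machinery of~\cite{be}. In effect your proposal, like the paper itself, ultimately defers the technical core to~\cite{be}; the difference is only that you spell out the heuristic skeleton whereas the paper simply cites the result.
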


\section{Main Results}
\noindent

In this section, we first prove the following result on asymptotic spreading.

\begin{theorem}\label{th1}
Let $(u_1(t,x),u_2(t,x))$ be defined by
\begin{equation}
\begin{cases}
\frac{\partial u_1(t,x)}{\partial t}=d_1\Delta
u_1(t,x)+r_1u_1(t,x)\left[
1-u_1(t,x)+b_1u_2(t,x)\right] , \\
\frac{\partial u_2(t,x)}{\partial t}=d_2\Delta
u_2(t,x)+r_2u_2(t,x)\left[ 1-u_2(t,x)+b_2u_1(t,x)\right],\\
u_1(0,x)=\phi_1(x), u_2(0,x)=\phi_2(x),
\end{cases}\label{3}
\end{equation}
in which $\phi_1(x)>0, \phi_2(x)>0$ are uniformly continuous and bounded for $x\in\mathbb{R}$. Suppose that $d_1r_1> d_2r_2$ holds. Then
\begin{equation}\label{4}
\lim_{t\to \infty}\inf_{|x|<ct}u_2(t,x)=\lim_{t\to
\infty}\sup_{|x|<ct}u_2(t,x)=k_2
\end{equation}
and
\begin{equation}\label{40}
\lim_{t\to \infty}\inf_{|x|<ct}u_1(t,x)=\lim_{t\to
\infty}\sup_{|x|<ct}u_1(t,x)=k_1
\end{equation}
for any $c<c^*=\min\{2 \sqrt{d_1r_1}, 2\sqrt{d_2r_2(1+b_2)}\}.$
\end{theorem}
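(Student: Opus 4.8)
The plan is to sandwich each $u_i$ between matching upper and lower bounds inside the cone $|x|<ct$. For the upper bound I would use the spatially homogeneous kinetic system as a supersolution: since $\Delta$ annihilates $x$-independent functions, the solution $(\bar u_1(t),\bar u_2(t))$ of the ODE system associated with \eqref{3} and started at $\bar u_i(0)=\sup_x\phi_i(x)$ is a supersolution, so by Lemma \ref{lecom} it dominates $(u_1,u_2)$ pointwise. Because $b_1b_2<1$ makes $(k_1,k_2)$ the globally attracting equilibrium of the cooperative kinetic system in the positive quadrant, $\bar u_i(t)\to k_i$, whence $\limsup_{t\to\infty}\sup_x u_i(t,x)\le k_i$. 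This already delivers the ``$\sup$'' halves of \eqref{4} and \eqref{40}, so it remains to prove the matching lower bounds $\liminf_{t\to\infty}\inf_{|x|<ct}u_i\ge k_i$.

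For the lower bounds I would first establish complete spreading and then bootstrap the carrying capacities up to $(k_1,k_2)$. Since $b_1u_2\ge0$, the first equation gives $\partial_tu_1\ge d_1\Delta u_1+r_1u_1(1-u_1)$, so comparing (Lemma \ref{com}) with the Fisher solution of \eqref{1} having $d=d_1,\,r=r_1,\,K=1$ and invoking Lemma \ref{l1.1} yields $\liminf_{t\to\infty}\inf_{|x|<(2\sqrt{d_1r_1}-\epsilon)t}u_1\ge1$ for every small $\epsilon>0$. The crucial and genuinely new step is the spreading of $u_2$ at the \emph{enhanced} speed: I regard the second equation of \eqref{3} as the nonautonomous problem \eqref{2} with $f(t,x,u)=r_2u\left(1-u+b_2u_1(t,x)\right)$, so that $f'_u(t,x,0)=r_2(1+b_2u_1(t,x))$. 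Taking the constant test function $\phi\equiv1$ gives $L\phi=-r_2(1+b_2u_1)\le-r_2<0$, hence $\lambda'_1<0$. I would then apply Lemma \ref{le2.5} along the admissible radius $r(t)=ct$ with $c<c^*$: for fixed $|x|<R$ and $t$ large the point $x\pm ct$ lies in the cone $|y|<(2\sqrt{d_1r_1}-\epsilon)t$ (this is where $c<c^*\le2\sqrt{d_1r_1}$ is used), so $u_1(t,x\pm ct)\ge1-o(1)$ and therefore
\[
4d_2f'_u(t,x\pm ct,0)-(r'(t))^2\ge 4d_2r_2(1+b_2)-c^2>0,
\]
since $c<c^*\le2\sqrt{d_2r_2(1+b_2)}$. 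Thus \eqref{9} holds and Lemma \ref{le2.5} gives $\liminf_{t\to\infty}\inf_{|x|\le ct}u_2>0$ for every $c<c^*$.

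With both species now bounded below by positive constants on every cone $|x|<c't$ with $c'<c^*$, I would upgrade these bounds to $(k_1,k_2)$ by a coupled iteration. Feeding $u_2\ge\beta$ into the first equation gives, inside the cone, $\partial_tu_1\ge d_1\Delta u_1+r_1u_1(1+b_1\beta-u_1)$, a Fisher nonlinearity of capacity $1+b_1\beta$ with speed $2\sqrt{d_1r_1(1+b_1\beta)}\ge2\sqrt{d_1r_1}\ge c^*$; symmetrically, feeding $u_1\ge a\ge1$ into the second equation yields capacity $1+b_2a$ with speed $\ge2\sqrt{d_2r_2(1+b_2)}\ge c^*$. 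Localizing each Fisher comparison to a slightly slower cone (so that the driving lower bound is available on a strictly larger cone) and using Lemma \ref{l1.1}, the capacities improve according to $a_{n+1}=1+b_1\beta_n$, $\beta_{n+1}=1+b_2a_n$; because $b_1b_2<1$ this iteration converges monotonically to its unique fixed point $(k_1,k_2)$. Taking the cone speeds $c''_n\downarrow c$ (with $c<c''_0<c^*$) and letting $n\to\infty$ gives $\liminf_{t\to\infty}\inf_{|x|<ct}u_i\ge k_i$, which together with the supersolution bound squeezes both the $\inf$ and the $\sup$ in \eqref{4} and \eqref{40} to $k_2$ and $k_1$.

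I expect the main obstacle to be the nonautonomous spreading of $u_2$, that is, verifying \eqref{9} along the moving frame $x\pm ct$: this is precisely where the coupling term $b_2u_1$ raises the effective linear growth from $r_2$ to $r_2(1+b_2)$ and produces the enhanced speed, and it hinges on having $u_1\ge1$ throughout the cone. Here the hypothesis $d_1r_1>d_2r_2$ designates species $1$ as the faster intrinsic invader, ensuring its front leads and that $u_1\approx1$ is already present on the cone $|x|<c^*t$ where $u_2$ propagates (since $c^*\le2\sqrt{d_1r_1}$). A secondary technical difficulty is the localization of the Fisher comparisons to moving cones during the bootstrap, since the driving lower bounds hold only inside cones; this is handled by always working on a strictly slower cone, letting the cone speed tend to $c$ and the number of iterations to infinity while the capacities increase to $(k_1,k_2)$.
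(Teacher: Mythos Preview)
Your overall strategy coincides with the paper's: first push $u_1$ up to level $1$ inside the cone $|x|<(2\sqrt{d_1r_1}-\epsilon)t$ by comparison with a scalar Fisher equation (this is Lemma \ref{12}), then treat the $u_2$ equation as the nonautonomous problem \eqref{2} and invoke Lemma \ref{le2.5} along a linearly moving frame to obtain $\liminf_{t\to\infty}\inf_{|x|<ct}u_2>0$ for every $c<c^*$, and finally upgrade the positive lower bounds to $(k_1,k_2)$ by an iterative argument on a nested family of cones $c_n\downarrow c$.

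Two implementation differences are worth flagging. First, the paper notes explicitly that the framework of \cite{be} requires $f(t,x,u)$ to be defined and regular for \emph{all} $t\in\mathbb{R}$ (the generalized eigenvalue $\lambda_1'$ and condition \eqref{9} are stated on $\mathbb{R}\times\mathbb{R}$), whereas $\overline f(t,x,u_2)=r_2u_2(1-u_2+b_2u_1(t,x))$ is only defined for $t>0$; the paper fixes this by a smooth interpolation to $r_2u_2(1-u_2)$ for $t\le 0$ and a scalar comparison argument (\eqref{z2}). Your verification of $\lambda_1'<0$ via $\phi\equiv 1$ and of \eqref{9} is otherwise the same in spirit, but you should insert this extension step. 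Second, for the bootstrap the paper does \emph{not} run localized Fisher comparisons as you propose; instead it uses the mild formulation \eqref{semi}, the positivity of the heat semigroup $T(t)$, and dominated convergence to show directly that the limits $\lim_n\underline u_i^n,\ \lim_n\overline u_i^n$ satisfy the four algebraic inequalities $1-\overline u_1+b_1\overline u_2\ge 0$, etc., which force $\underline u_i=\overline u_i=k_i$. This avoids the delicate point in your approach, namely that the differential inequality $\partial_t u_1\ge d_1\Delta u_1+r_1u_1(1+b_1\beta-u_1)$ only holds \emph{inside} the cone where $u_2\ge\beta$, so a global Fisher comparison is not directly available and must be localized (your ``slightly slower cone'' device); this can be made rigorous, but it is more work than you indicate. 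Your ODE-supersolution argument for the upper bound $\limsup\le k_i$ is a clean shortcut not used in the paper.
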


For the main condition of the theorem, we give the following remark.
\begin{remark}{\rm
By \cite{aron1}, $d_1r_1 >d_2r_2$ implies that $u_1$ has stronger spreading ability than that of $u_2$ if the inter-specific actions disappear in \eqref{3} (namely, $b_1=b_2=0$ in \eqref{3}).}
\end{remark}

Before verifying Theorem \ref{th1}, we first prove several lemmas,
through which the conditions of Theorem \ref{th1} will be imposed.

\begin{lemma}\label{11}
For all $x\in \mathbb{R}, t>0,$ the Cauchy problem \eqref{3} admits
a unique solution $(u_1(t,x),u_2(t,x))$ such that
\[
(0,0)<(u_1(t,x),u_2(t,x))\le    (E_1, E_2),
\]
in which
\[
E_1=\max\left\{\sup_{x\in \mathbb{R} }\phi_1(x), k_1,
\frac{k_1}{k_2}\sup_{x\in \mathbb{R} }\phi_2(x)\right\},
E_2=\max\left\{\sup_{x\in \mathbb{R} }\phi_2(x), k_2,
\frac{k_2}{k_1}\sup_{x\in \mathbb{R} }\phi_1(x)\right\}.
\]
\end{lemma}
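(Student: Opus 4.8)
The plan is to combine standard parabolic well-posedness with the comparison principle of Lemma~\ref{lecom}, the only nontrivial ingredient being the choice of a constant supersolution. First I would establish local existence and uniqueness: since $\phi_1,\phi_2$ are bounded and uniformly continuous and the reaction terms are locally Lipschitz (indeed smooth) in $(u_1,u_2)$, the Cauchy problem \eqref{3} admits a unique classical solution on a maximal interval $[0,T_{\max})$ by the usual analytic-semigroup fixed-point argument. The task then reduces to proving the two-sided bound on this interval, which simultaneously yields global existence, because a uniform a priori bound rules out blow-up and forces $T_{\max}=\infty$.

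Next I would obtain positivity. Because each reaction term carries $u_i$ as a factor, the pair $(0,0)$ satisfies the system with equality, so Lemma~\ref{lecom} applied with $z_i(0,x)=0\le\phi_i(x)$ gives $(u_1,u_2)\ge(0,0)$. Strict positivity for $t>0$ then follows from the strong maximum principle: on each finite time slab the solution is bounded, so each $u_i$ solves a linear parabolic equation $\partial_t u_i=d_i\Delta u_i+c_i(t,x)u_i$ with bounded coefficient $c_i=r_i(1-u_i+b_iu_j)$, and $\phi_i>0$ forces $u_i(t,x)>0$.

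The heart of the argument is the upper bound, where I would use a constant supersolution lying on the ray through the coexistence equilibrium $K=(k_1,k_2)$. Set
\[
\lambda^*=\max\left\{1,\ \frac{\sup_{x}\phi_1(x)}{k_1},\ \frac{\sup_{x}\phi_2(x)}{k_2}\right\},\qquad (E_1,E_2)=\lambda^*(k_1,k_2),
\]
and note that multiplying $\lambda^*$ through the three terms reproduces exactly the $E_1,E_2$ in the statement; by construction this pair dominates the initial data. To see it is a supersolution I would exploit the defining identities $k_1=1+b_1k_2$ and $k_2=1+b_2k_1$, that is $k_1-b_1k_2=k_2-b_2k_1=1$. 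Substituting $E_i=\lambda^*k_i$ gives $1-E_1+b_1E_2=1-E_2+b_2E_1=1-\lambda^*\le0$, so the constant vector satisfies both reaction inequalities with the sign required by Lemma~\ref{lecom}, whence $(u_1,u_2)\le(E_1,E_2)$.

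Finally, the a priori bound $(0,0)<(u_1,u_2)\le(E_1,E_2)$ extends the local solution to all $t>0$, completing the proof. The main obstacle is not any individual estimate but the recognition that $K$ can be rescaled: precisely because $K$ solves the kinetic equilibrium equations, \emph{any} multiple $\lambda K$ with $\lambda\ge1$ renders the reaction terms nonpositive, and the cooperative (quasimonotone) structure lets the comparison principle close the argument. Everything else—local well-posedness, positivity, and global continuation—is routine.
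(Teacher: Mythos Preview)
Your proof is correct and follows essentially the same route as the paper: both use $(0,0)$ as a lower solution, $(E_1,E_2)$ as a constant upper solution, and Lemma~\ref{lecom} to close the argument, with strict positivity coming from the strong maximum principle (the paper phrases this as ``infinite propagation speed''). Your observation that $(E_1,E_2)=\lambda^*(k_1,k_2)$ with $\lambda^*\ge 1$, together with the identity $k_i-b_ik_j=1$, is a cleaner way to see \emph{why} the reaction terms are nonpositive at $(E_1,E_2)$ than the paper's bare assertion of the inequalities, but the underlying argument is identical.
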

\begin{proof}
We prove the lemma by comparison principle. Clearly,
\[
r_1E_1(1-E_1+b_1E_2)\le 0, r_2E_2(1-E_2+b_2E_1)\le 0.
\]
Then $(E_1,E_2)$ is an upper solution  while $(0,0)$ is a lower
solution of \eqref{3} for $(t,x)\in (0,+\infty)\times \mathbb{R}$. Therefore, Lemma \ref{lecom} indicates that
\[
(0,0)\le(u_1(t,x),u_2(t,x))\le (E_1, E_2),\,\, (t,x)\in (0,+\infty)\times \mathbb{R}.
\]
The strict inequalities are evident by the following two facts:
\begin{description}
\item[(1)] The heat operator has property of infinite propagation
speed;
\item[(2)] $\phi_1(x), \phi_2(x)$ admit nonempty
supports.
\end{description}

The proof is complete.
\end{proof}

\begin{lemma}\label{12}
Define $c_1=2 \sqrt{d_1r_1}$. Suppose that $u_1(t,x)$ is defined by
\eqref{3}. Then
\begin{equation}\label{5}
\liminf_{t\to \infty}\inf_{|x|<ct}u_1(t,x)\ge 1
\end{equation}
for any $c<c_1.$
\end{lemma}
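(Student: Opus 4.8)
The plan is to bound $u_1$ from below by the solution of a pure Fisher equation, exploiting the cooperative (hence sign-favourable) structure of the coupling. First I would observe that, by Lemma \ref{11}, the solution satisfies $u_2(t,x)>0$ for all $t>0$ and $x\in\mathbb{R}$, so that $b_1u_2(t,x)\ge 0$. Consequently the reaction term in the $u_1$-equation dominates the corresponding Fisher nonlinearity with $K=1$:
\[
r_1u_1\left[1-u_1+b_1u_2\right]\ge r_1u_1\left[1-u_1\right],
\]
which shows that $u_1$ is a supersolution of the scalar equation
\[
\frac{\partial z(t,x)}{\partial t}=d_1\Delta z(t,x)+r_1z(t,x)\left[1-z(t,x)\right].
\]

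Next I would introduce the comparison function explicitly: let $z(t,x)$ denote the solution of Fisher's equation \eqref{1} with $d=d_1$, $r=r_1$, $K=1$ and initial datum $z(0,x)=\phi_1(x)$. Since $\phi_1$ is positive, uniformly continuous and bounded, this is an admissible initial datum for \eqref{1}. Because $u_1(0,x)=\phi_1(x)=z(0,x)$ and $u_1$ satisfies the supersolution inequality displayed above with $u_1\ge 0$, the comparison principle in Lemma \ref{com} yields
\[
u_1(t,x)\ge z(t,x),\qquad t>0,\ x\in\mathbb{R}.
\]

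It then remains to transfer the known spreading behaviour of $z$. Applying Lemma \ref{l1.1} with $d=d_1$, $r=r_1$, $K=1$ (note that only the positivity of $\phi_1$, and not compact support, is needed for the $\liminf$ statement there), I obtain
\[
\lim_{t\to\infty}\inf_{|x|<(2\sqrt{d_1r_1}-\epsilon)t}z(t,x)=1
\]
for every $\epsilon\in(0,2\sqrt{d_1r_1})$. Given $c<c_1=2\sqrt{d_1r_1}$, I would choose $\epsilon=c_1-c>0$; combining the pointwise bound $u_1\ge z$ with the above limit then gives $\liminf_{t\to\infty}\inf_{|x|<ct}u_1(t,x)\ge 1$, as claimed.

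Since the argument is a direct monotonicity-plus-comparison scheme, there is no serious analytical obstacle; the only point requiring a little care is the bookkeeping needed to match the parameters of Fisher's equation ($K=1$) to the $u_1$-equation and to confirm that the $\liminf$ half of Lemma \ref{l1.1} applies under the mere positivity (rather than compact support) of $\phi_1$. The essential mechanism, namely discarding the nonnegative cooperative term $b_1u_2$ to reduce the coupled system to a scalar lower-bound problem, is what makes the estimate clean.
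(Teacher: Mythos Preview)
Your proof is correct and follows essentially the same approach as the paper: drop the nonnegative cooperative term $b_1u_2$ (using Lemma~\ref{11}), recognize $u_1$ as a supersolution of the Fisher equation with $K=1$, and then invoke Lemmas~\ref{l1.1} and~\ref{com}. The paper's own proof is just a terser version of exactly this argument.
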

\begin{proof}
By Lemma \ref{11}, we see that
\[
\frac{\partial u_1(t,x)}{\partial t}\ge d_1\Delta
u_1(t,x)+r_1u_1(t,x)\left[ 1-u_1(t,x)\right],x\in \mathbb{R}, t>0.
\]
Then the result is evident by Lemmas \ref{l1.1} and \ref{com}. The
proof is complete.
\end{proof}

Let $\beta>0$ be a constant such that
\[
\beta u_1+r_1u_1[1-u_1+b_1u_2],\text{  }\beta
u_2+r_2u_2[1-u_2+b_2u_1]
\]
are monotone increasing if \[
(0,0)\le (u_1,u_2) \le    (E_1, E_2).
\]

For
$t\geq 0$, define $T(t)=(T_1(t),T_2(t))$ as follows
\[
\begin{cases}
T_1(t)u_1 (x)=\frac{e^{-\beta t}}{\sqrt{4\pi d_1t}}\int_{-\infty
}^\infty e^{-\frac{(x-y)^2}{4d_1t}}u_1 (y)dy, \\
T_2(t)u_2 (x)=\frac{e^{-\beta t}}{\sqrt{4\pi d_2t}}\int_{-\infty
}^\infty e^{-\frac{(x-y)^2}{4d_2t}}u_2 (y)dy.
\end{cases}
\]
Moreover, for $t\ge 0,s\ge 0,$ we still denote
\[
\begin{cases}
T_1(t)u_1 (s,x)=\frac{e^{-\beta t}}{\sqrt{4\pi d_1t}}\int_{-\infty
}^\infty e^{-\frac{(x-y)^2}{4d_1t}}u_1 (s,y)dy, \\
T_2(t)u_2 (s,x)=\frac{e^{-\beta t}}{\sqrt{4\pi d_2t}}\int_{-\infty
}^\infty e^{-\frac{(x-y)^2}{4d_2t}}u_2 (s,y)dy.
\end{cases}
\]

Let $X$ be defined as follows
\[
X=\{ u: u\text{ is a bounded and uniformly continuous function from
}\Bbb{R}\text{ to }\Bbb{R}^2\},
\]
which is a Banach space equipped with the supremum norm.
Then $T(t):X\to X$ is an analytic semigroup (see \cite{da}). Denote
\[
X^+=\{ u: u\in X, u\ge 0\}.
\]
Then $T(t):X^+\to X^+$ is a positive semigroup.
Using the standard theory of semigroup (see \cite{pazy}), we have the following
conclusion.
\begin{lemma}
The unique solution of \eqref{3} can also be formulated by
\begin{equation}\label{semi}
\begin{cases}
u_1(t,x)=T_1(t)\phi_1 (x)+ \int_0^t T_1(t-s)[F_1(u_1,u_2)](s,x)ds, \\
u_2(t,x)=T_2(t)\phi_2 (x)+ \int_0^t T_2(t-s)[F_2(u_1,u_2)](s,x)ds,
\end{cases}
\end{equation}
in which $F_1(u_1,u_2)=\beta u_1+r_1u_1[1-u_1+b_1u_2],\text{
}F_2(u_1,u_2)=\beta u_2+r_2u_2[1-u_2+b_2u_1].$
\end{lemma}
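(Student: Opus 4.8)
The plan is to recast the Cauchy problem \eqref{3} as an abstract semilinear evolution equation in the Banach space $X$ and then to recognize \eqref{semi} as its variation-of-constants (Duhamel) representation. First I would add and subtract $\beta u_i$ in each equation of \eqref{3}, rewriting it as
\begin{equation*}
\frac{\partial u_i}{\partial t}=\left(d_i\Delta-\beta\right)u_i+F_i(u_1,u_2),\quad i=1,2,
\end{equation*}
so that the linear part is exactly the generator $A_i=d_i\Delta-\beta$ of the analytic semigroup $T_i(t)$ defined above (the Gaussian kernel weighted by $e^{-\beta t}$), while the entire nonlinear reaction is absorbed into $F_i$. In vector form this is $u_t=Au+F(u)$ with $A=\mathrm{diag}(A_1,A_2)$ generating $T(t)=(T_1(t),T_2(t))$ on $X$ and $F=(F_1,F_2)$. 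Since the $F_i$ are quadratic polynomials in $(u_1,u_2)$, hence of class $C^1$ and locally Lipschitz, and since Lemma \ref{11} confines the solution to the bounded invariant rectangle $[0,E_1]\times[0,E_2]$, the map $F$ is Lipschitz continuous along the orbit; this is precisely the hypothesis under which the abstract theory applies.

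The key step is to pass between the classical solution furnished by Lemma \ref{11} and the integral formula \eqref{semi}. I would do this directly by a Duhamel argument: fix $t>0$ and $i$, and consider the $X$-valued map $s\mapsto T_i(t-s)u_i(s,\cdot)$ on $[0,t]$. Because $T_i$ is generated by $A_i$ one has $\frac{d}{ds}T_i(t-s)=-A_iT_i(t-s)$, and because $u_i$ solves the rewritten equation one has $\partial_s u_i=A_iu_i+F_i$. As $A_i$ commutes with $T_i(t-s)$, the two $A_i$-terms cancel and
\begin{equation*}
\frac{d}{ds}\,T_i(t-s)u_i(s)=T_i(t-s)F_i\big(u_1,u_2\big)(s).
\end{equation*}
Integrating in $s$ from $0$ to $t$ and using $T_i(0)=I$ collapses the left side to $u_i(t)-T_i(t)\phi_i$, which is exactly \eqref{semi}. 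Alternatively one may simply invoke the variation-of-constants formula for analytic semigroups (Pazy \cite{pazy}), under which the classical solution and the mild solution coincide.

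The main obstacle is the rigorous justification of differentiating under the semigroup, namely that $u_i(s,\cdot)\in D(A_i)$ so that $\partial_s\,T_i(t-s)u_i(s)$ is meaningful and the product rule is legitimate. Parabolic smoothing guarantees $u_i(s,\cdot)\in D(A_i)$ for every $s>0$, so the identity above holds on $(0,t)$; the endpoint $s=0$ requires a limiting argument using the strong continuity of $T_i$ and the boundedness of $F_i$ on the invariant rectangle to show that $\int_0^\varepsilon T_i(t-s)F_i\,ds\to 0$ as $\varepsilon\to 0^+$. Once the integral identity is established for the classical solution, the uniqueness asserted in Lemma \ref{11} shows that \eqref{semi} characterizes that same unique solution, which completes the proof.
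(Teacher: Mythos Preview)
Your proposal is correct and follows the same route as the paper: the paper does not give an independent proof but simply invokes the standard variation-of-constants/Duhamel formula from semigroup theory (citing Pazy \cite{pazy}), and your argument is precisely an explicit unpacking of that result. Your handling of the rewriting $\partial_t u_i=(d_i\Delta-\beta)u_i+F_i$, the identification of $T_i(t)$ as the semigroup generated by $d_i\Delta-\beta$, and the endpoint justification via parabolic smoothing and the a~priori bounds from Lemma~\ref{11} are all sound and in fact more detailed than what the paper provides.
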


By above lemmas, we give the proof of Theorem \ref{th1} as
follows.

\begin{proof}
Let $\overline{c} <
c^*$ be fixed.  By \eqref{5}, we can choose $\epsilon >0$ satisfying the following facts.
\begin{description}
\item[(A)] There exists $T>0$ such that
\begin{equation}\label{7}
\inf_{4|x|<(\overline{c}+3c^*)t}u_1(t,x)> 1- \epsilon \text{ for all
}t>T.
\end{equation}
\item[(B)] $4\sqrt{d_2r_2(1+b_2(1-\epsilon))}>\overline{c}+c^* >2 \overline{c}$.
\end{description}

Define $r(t)=(\overline{c}+c^*)t/2.$ Then $r(t)\in
C^{\infty}(\mathbb{R}^+,\mathbb{R}^+ )$ such that Definition
\ref{d2.3} is true. Moreover, $\lim_{t\to \infty}r(t)= \infty $ also
implies that Definition \ref{d2.1} holds and a complete spreading of $u_1$
has been proved.

Denote
\begin{eqnarray}
\frac{\partial u_{2}(t,x)}{\partial t} &=&d_{2}\Delta
u_{2}(t,x)+r_{2}u_{2}(t,x)\left[ 1-u_{2}(t,x)+b_{2}u_{1}(t,x)\right]  \nonumber\\
&=&:d_{2}\Delta u_{2}(t,x)+\overline{f}(t,x,u_{2}),\label{*}
\end{eqnarray}
in which the definition of $\overline{f}$ is clear. To apply
Lemma \ref{le2.5}, we encounter some difficulties since
$\overline{f}(t,x,u_2)$ has no definition if $t<0.$ So we define $f$
such that
\[
f(t,x,u_2)=\begin{cases} \overline{f}(t,x,u_2), t>1,\\
g(t,x,u_2), t\in [0,1],\\
r_2u_2(1-u_2), t<0,
\end{cases}
\]
in which $g(t,x,u_2)=u_2g_1(t,x,u_2)$ with
\[
r_2(1-u_2)\le g_1(t,x,u_2)\le r_2 (1-u_2+b_2u_1), t\in [0,1], x\in\mathbb{R}
\]
such that $f$ satisfies the smooth condition of \eqref{2}. Since $[0,1]$
is a bounded interval, the existence of $f$ or $g$ is clear. Consider the
following initial value problem
\begin{equation*}
\begin{cases}
\frac{\partial z_{2}(t,x)}{\partial t} =d_{2}\Delta
z_{2}(t,x)+\overline{f}(t,x,z_{2}),\\
z_2(0,x)=z(x)>0,
\end{cases}
\end{equation*}
and
\begin{equation*}
\begin{cases}
\frac{\partial z(t,x)}{\partial t} =d_{2}\Delta
z(t,x)+f(t,x,z),\\
z(0,x)=z(x)>0.
\end{cases}
\end{equation*}
Then the comparison principle implies that \begin{equation}\label{z2} z_2(t,x)\ge z(t,x), \,\, (t,x)\in (0,\infty)\times \mathbb{R}.
\end{equation}

Thus, it suffices to study
\begin{eqnarray*}
\frac{\partial u_{2}(t,x)}{\partial t} =d_{2}\Delta u_{2}(t,x)+{f}(t,x,u_{2}).
\end{eqnarray*}
Evidently, for any $(t,x)\in\mathbb{R}\times \mathbb{R},$ we have
\begin{equation}\label{8}
r_2E_2\ge {f}_{u_2}'(t,x,0)\ge r_2.
\end{equation}

For the Fisher equation
\begin{eqnarray*}\label{13}
\frac{\partial u_{2}(t,x)}{\partial t} =d_{2}\Delta
u_{2}(t,x)+r_{2}u_{2}(t,x)\left[ 1-u_{2}(t,x)\right],
\end{eqnarray*}
we see that the corresponding  $\lambda_1'<0$ by Berestycki et al. \cite[Section 1.5]{be}. Then  \eqref{8} implies that the corresponding  $\lambda_1'$ of \eqref{*} is also negative.

Therefore, we just need to verify that \eqref{9} is true. For any
$R>0$, we see that
\begin{eqnarray*}
&&\liminf_{t\rightarrow \infty }\left\{ \inf_{|x|<R}\left[
4df_{u_{2}}^{\prime }(t,x\pm r(t),0)-(r^{\prime }(t))^{2}\right] \right\}  \\
&=&\liminf_{t\rightarrow \infty }\left\{ \inf_{|x|<R}\left[
4df_{u_{2}}^{\prime }(t,x\pm r(t),0)-\left( \frac{\overline{c}+c^{\ast }}{2}%
\right) ^{2}\right] \right\}  \\
&=&\liminf_{t\rightarrow \infty }\left\{ \inf_{|x|<R}\left[
4dr_{2}(1+b_{2}u_{1}(t,x\pm r(t)))\right] -\left(
\frac{\overline{c}+c^{\ast }}{2}\right) ^{2}\right\} .
\end{eqnarray*}%
By the item (B), it is clear that \eqref{7}  holds if $t>0$ is
large enough.
Therefore,
\begin{eqnarray*}
&&\liminf_{t\rightarrow \infty }\left\{ \inf_{|x|<R}\left[
4dr_{2}(1+b_{2}u_{1}(t,x\pm r(t)))\right] -\left(
\frac{\overline{c}+c^{\ast
}}{2}\right) ^{2}\right\}  \\
&\geq &4dr_{2}(1+b_{2}(1-\epsilon ))-\left( \frac{\overline{c}+c^{\ast }}{2}%
\right) ^{2} \\
&>&0
\end{eqnarray*}
since $R<-\frac{\overline{c}-c^{\ast }}{2}t \to \infty$ as  $t\to \infty.$

Note that $4dr_{2}(1+b_{2}(1-\epsilon
))-\frac{\overline{c}+c^{\ast }}{2}$ is independent of $R>0$, we
also obtain that
\[
\liminf_{R\rightarrow \infty }\left\{ \liminf_{t\rightarrow \infty
}\left\{
\inf_{|x|<R}\left[ 4df_{u_{2}}^{\prime }(t,x\pm r(t),0)-(r^{\prime }(t))^{2}%
\right] \right\} \right\} >0.
\]

By Lemmas \ref{le2.4}-\ref{le2.5} and \eqref{z2}, if
$c=\overline{c},$ then
\[
\liminf_{t\to \infty}\inf_{|x|<ct}u_2(t,x) >0.
\]
Due to the arbitrary of $\overline{c},$ what we have done implies that
\begin{equation*}
\liminf_{t\to \infty}\inf_{|x|<ct}u_2(t,x)>0,\,\, \liminf_{t\to \infty}\inf_{|x|<ct}u_1(t,x)>0
\end{equation*}
for any $c<c^*.$

It suffices to verify that \eqref{4} and \eqref{40} are also true for any fixed $\overline{c}< c^*$.
Let $2c_1=\overline{c}+ c^*, $ $c_1>c_2>\cdots > c_n >c_{n+1} >\cdots,$ $\lim_{n\to\infty}c_n=\overline{c},$ define positive constants
\begin{equation*}
\begin{cases}
\liminf_{t\to \infty}\inf_{|x|< {c_n}
t}u_1(t,x)=\underline{u}_1^n,\text{
}\liminf_{t\to \infty}\inf_{ |x|< {c_n} t}u_2(t,x)=\underline{u}_2^n,\\
\limsup_{t\to \infty}\sup_{|x|< {c_n}
t}u_1(t,x)=\overline{u}_1^n,\text{ }\limsup_{t\to \infty}\sup_{ |x|<
 {c_n} t}u_2(t,x)=\overline{u}_2^n.
\end{cases}
\end{equation*}
and
\begin{equation*}
\begin{cases}
\liminf_{t\to \infty}\inf_{|x|< \overline{c}
t}u_1(t,x)=\underline{u}_1,\text{
}\liminf_{t\to \infty}\inf_{ |x|< \overline{c} t}u_2(t,x)=\underline{u}_2,\\
\limsup_{t\to \infty}\sup_{|x|< \overline{c}
t}u_1(t,x)=\overline{u}_1,\text{ }\limsup_{t\to \infty}\sup_{ |x|<
\overline{c} t}u_2(t,x)=\overline{u}_2.
\end{cases}
\end{equation*}
Clearly, these positive constants are well defined and  satisfy
\begin{description}
\item[(L1)]$\underline{u}_1^n, \underline{u}_2^n$ are nondecreasing and $\underline{u}_1^n \le \underline{u}_1, \underline{u}_2^n \le \underline{u}_2$ for all $n>0$;
\item[(L2)] $\overline{u}_1^n, \overline{u}_2^n$ are nonincreasing and $\overline{u}_1^n \ge \overline{u}_1, \overline{u}_2^n \ge \overline{u}_2$ for all $n>0$;
\item[(L3)] $\lim_{n\to\infty}\underline{u}_i^n$ and $\lim_{n\to\infty}\overline{u}_i^n$ exist for $i=1,2;$
\item[(L4)] $\lim_{n\to\infty}\underline{u}_i^n\le \underline{u}_i \le \overline{u}_i \le \lim_{n\to\infty}\overline{u}_i^n,i=1,2.$
\end{description}

For each $n\ge 1,$ $t\to\infty  $ implies that  $ (c_{n+1}-c_n)t\to \infty$.
Using the positivity of the semigroup of $T(t)$ and the dominated convergence theorem for $t\to \infty$ in  \eqref{semi}, we see that
\[
\overline{u}_1^{n+1}\le \frac{\beta \overline{u}_1^{n}+r_1 \overline{u}_1^{n}[1-\overline{u}_1^{n}+b_1 \overline{u}_2^{n}]}{\beta}
\]
by the monotonicity of $F_1.$ Letting $n\to \infty,$ we further obtain that
\[
1-\lim_{n\to\infty}\overline{u}^n_{1}+b_{1}\lim_{n\to\infty}\overline{u}^n_{2} \geq 0.
\]
In a similar way, we have
\begin{eqnarray*}
1-\lim_{n\rightarrow \infty }\underline{u}_{1}^{n}+b_{1}\lim_{n\rightarrow
\infty }\underline{u}_{2}^{n} &\leq &0, \\
1-\lim_{n\rightarrow \infty }\underline{u}_{2}^{n}+b_{2}\lim_{n\rightarrow
\infty }\underline{u}_{1}^{n} &\leq &0, \\
1-\lim_{n\rightarrow \infty }\overline{u}_{2}^{n}+b_{2}\lim_{n\rightarrow
\infty }\overline{u}_{1}^{n} &\geq &0,
\end{eqnarray*}
and
\[
\lim_{n\to\infty}\underline{u}^n_{1}=\lim_{n\to\infty}\overline{u}^n_{1}=k_{1},\text{  }
\lim_{n\to\infty}\underline{u}^n_{2}=\lim_{n\to\infty}\overline{u}^n_{2}=k_{2}.
\]

From (L4), we obtain
\[
\underline{u}_{1}=\overline{u}_{1}=k_{1},\text{  }\underline{u}_{2}=\overline{u}%
_{2}=k_{2}.
\]
Since $\overline{c}$ is arbitrary, we complete the proof.
\end{proof}

We now present three remarks to further illustrate our conclusion.
\begin{remark}\label{r1}{\rm
If $d_1=d_2, r_1=r_2,$ then Lin et al. \cite{llm}
implies that \eqref{0} has a traveling wave solution connecting
$(0,0)$ with $(k_1,k_2)$ for any wave speed which is larger than
$2\sqrt{d_1r_1}=2\sqrt{d_2r_2}$. Therefore, if $0<\phi_1(x)<k_1, 0<\phi_2(x)<k_2$ admit compact supports, then the standard comparison
principle states that the asymptotic speeds of spreading of two invasion species are not
larger than $2\sqrt{d_1r_1}$ (see the subsequent Propositions
\ref{p1} and \ref{p2}). By Lemma \ref{12}, the asymptotic speeds of spreading of both invasion species
are  $2\sqrt{d_1r_1}=2\sqrt{d_2r_2}$.
}\end{remark}

\begin{remark}\label{r2}{\rm
If $d_1r_1>d_2r_2k_2=d_2r_2(1+b_2k_1)$ holds and
$0<\phi_1(x)<k_1, 0<\phi_2(x)<k_2$ admit compact supports,
 then
$0<u_i(t,x)\le k_i$ and
\[
\frac{\partial u_2(t,x)}{\partial t}\le d_2\Delta
u_2(t,x)+r_2u_2(t,x)\left[ k_2-u_2(t,x)\right].
\]
Namely, $u_2$ is a lower solution of the following  Cauchy problem
\begin{equation*}
\begin{cases}
\frac{\partial w_2(t,x)}{\partial t}= d_2\Delta
w_2(t,x)+r_2w_2(t,x)\left[ k_2-w_2(t,x)\right],\\
w_2(0,x)=\phi_2(x).
\end{cases}
\end{equation*}
Then the comparison principle (Lemma \ref{com}) indicates that
$u_2(t,x)\le w_2(t,x),$ and the upper bounds of asymptotic speed of spreading of
$u_2(t,x)$ is not
larger than $2\sqrt{d_2r_2k_2}$ by Lemma \ref{l1.1}. Recalling Lemma
\ref{12}, the lower bounds of asymptotic speed of spreading of $u_1(t,x)$ is
larger than $2 \sqrt{d_1r_1}$ such that two species have two
\emph{distinct} asymptotic speeds of spreading even if both of them are constants.}
\end{remark}

\begin{remark}\label{r3}{\rm
If $2 \sqrt{d_2r_2} < 2 \sqrt{d_1r_1}\le
2\sqrt{d_2r_2(1+b_2)}$ with $d_1=d_2$ and  $0<\phi_1(x)<k_1, 0<\phi_2(x)<k_2$ admit compact supports, then Lin et al. \cite{llm}
implies that the asymptotic speeds of spreading of both invasion species are less than $2
\sqrt{d_1r_1}$ (see Propositions \ref{p1} and \ref{p2}), and
Theorem \ref{th1} indicates that the asymptotic speeds of spreading of both species are   $2\sqrt{d_1r_1}$ such that the invasion of $u_2$ is fastened by $u_1$.
}
\end{remark}

Before ending this section, we also apply our main result to the study of traveling wave solutions of \eqref{0}.

\begin{proposition}\label{p1}
If \eqref{0} has a traveling wave solution
$(u_1(t,x),u_2(t,x))=(\psi_1(x+ct),\psi_2(x+ct))$ connecting $(0,0)$
with $(k_1,k_2).$ Then the asymptotic speeds of spreading  of $u_1(t,x),u_2(t,x)$
are not larger than $c$ if  $0<\phi_1(x)<k_1, 0<\phi_2(x)<k_2$ admit compact supports.
\end{proposition}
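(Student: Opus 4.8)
The plan is to exploit the fact that the traveling wave profile is itself an exact solution of \eqref{0}, together with the invariance of the system under spatial translations, temporal translations, and the reflection $x\mapsto-x$ (since $\Delta=\partial_{xx}$ is even). The idea is to squeeze $(u_1,u_2)$ from above between two copies of the wave travelling in opposite directions via the comparison principle (Lemma \ref{lecom}), and then read off the upper bound on the spreading speed from the decay of the profile at $-\infty$. Throughout I take the convention $\psi_i(-\infty)=0$, $\psi_i(+\infty)=k_i$ (as in \cite{liwe,llm}; the reverse orientation is symmetric).

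First I would set up the domination of the initial data. Since $0<\phi_i(x)<k_i$ with $\phi_i$ supported in some $[-M,M]$, and since $\psi_i(\xi)\to k_i$ as $\xi\to+\infty$, I can choose a single shift $a>0$ (the larger of the values needed for $i=1,2$) so that $\psi_i(x+a)>\phi_i(x)$ on $[-M,M]$; outside $[-M,M]$ one has $\phi_i\equiv0\le\psi_i$. Hence $(\psi_1(x+a),\psi_2(x+a))\ge(\phi_1,\phi_2)$ on all of $\mathbb{R}$. Because $(\psi_1(x+ct+a),\psi_2(x+ct+a))$ solves \eqref{0} and dominates the data at $t=0$, Lemma \ref{lecom} yields $u_i(t,x)\le\psi_i(x+ct+a)$ for all $t>0$. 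By reflection invariance the pair $(\psi_1(-x+ct+a),\psi_2(-x+ct+a))$ is also a solution, and the same shift makes it dominate the (compactly supported) data, so that $u_i(t,x)\le\psi_i(-x+ct+a)$ as well. Combining the two bounds gives
\[
u_i(t,x)\le\min\{\psi_i(x+ct+a),\ \psi_i(-x+ct+a)\},\qquad t>0,\ x\in\mathbb{R},\ i=1,2.
\]

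To conclude, fix $\epsilon>0$. For $x>(c+\epsilon)t$ the reflected bound gives the argument $-x+ct+a<-\epsilon t+a$, while for $x<-(c+\epsilon)t$ the direct bound gives $x+ct+a<-\epsilon t+a$; in either region the relevant argument tends to $-\infty$ as $t\to\infty$. Since $\psi_i(\xi)\to0$ as $\xi\to-\infty$, the quantity $\sup_{\xi<s}\psi_i(\xi)\to0$ as $s\to-\infty$, whence
\[
\sup_{|x|>(c+\epsilon)t}u_i(t,x)\le\sup_{\xi<-\epsilon t+a}\psi_i(\xi)\longrightarrow0,\qquad t\to\infty.
\]
This is exactly item a) of Definition \ref{d1.1} with $c_*=c$, so the asymptotic speed of spreading of each $u_i$ is not larger than $c$.

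The main point requiring care is the selection of a single shift $a$ that makes both the direct and the reflected wave profiles dominate the compactly supported data simultaneously for $i=1,2$; this hinges on the strict inequalities $\phi_i<k_i$ combined with $\psi_i(+\infty)=k_i$. A secondary technical step is justifying $\sup_{\xi<s}\psi_i(\xi)\to0$ from $\psi_i(-\infty)=0$ without assuming monotonicity of the profile, which follows immediately from the definition of the limit and the uniform continuity of $\psi_i$.
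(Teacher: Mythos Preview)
Your proof is correct and follows the same route as the paper: shift the traveling wave so that its profile dominates the compactly supported initial data, and then invoke the comparison principle (Lemma~\ref{lecom}) to bound $(u_1,u_2)$ from above by the translated wave. The paper stops at the single inequality $u_i(t,x)\le\psi_i(x+ct+\rho)$ and declares the result clear; you go further and make the two-sided estimate explicit by also invoking the reflected wave $(\psi_1(-x+ct+a),\psi_2(-x+ct+a))$, which is the honest way to obtain item~(a) of Definition~\ref{d1.1} for both signs of $x$. Two incidental remarks: the passage $\sup_{\xi<s}\psi_i(\xi)\to0$ follows from $\psi_i(-\infty)=0$ alone, no uniform continuity is needed; and the choice of the shift $a$ uses that $\phi_i$ is continuous with compact support and strictly below $k_i$, so its maximum is strictly less than $k_i$.
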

\begin{proof}
In the lemma, a traveling wave solution
$(u_1(t,x),u_2(t,x))=(\psi_1(x+ct),\psi_2(x+ct))$ connecting $(0,0)$
with $(k_1,k_2)$ is formulated by
\[
\lim_{s\to -\infty}(\psi_1(s),\psi_2(s))=(0,0), \lim_{s\to
\infty}(\psi_1(s),\psi_2(s))=(k_1,k_2),
\]
where $(\psi_1,\psi_2)$ is the wave profile and $c$ is the wave speed.

For any $\rho \in \mathbb{R},$ a traveling wave solution
$(\psi_1(x+ct+\rho),\psi_2(x+ct+\rho))$ is also an entire solution (defined for all $t\in\mathbb{R}$)
of the following Cauchy problem
\begin{equation*}
\begin{cases}
\frac{\partial u_1(t,x)}{\partial t}=d_1\Delta
u_1(t,x)+r_1u_1(t,x)\left[
1-u_1(t,x)+b_1u_2(t,x)\right] , \\
\frac{\partial u_2(t,x)}{\partial t}=d_2\Delta
u_2(t,x)+r_2u_2(t,x)\left[ 1-u_2(t,x)+b_2u_1(t,x)\right],\\
u_1(0,x)=\psi_1(x+\rho), u_2(0,x)=\psi_2(x+\rho).
\end{cases}
\end{equation*}
Letting $\rho $ large enough, then
\[
\psi_1(x+\rho)\ge \phi_1(x), \psi_2(x+\rho)\ge \phi_2(x)
\]
since  $0<\phi_1(x)<k_1, 0<\phi_2(x)<k_2$ have compact supports.
Now $(\psi_1(x+ct+\rho),\psi_2(x+ct+\rho))$ becomes an upper
solution of \eqref{3}. Then the comparison principle implies that
\[
\psi_1(x+ct+\rho)\ge u_1(t,x), \psi_2(x+ct+\rho)\ge u_2(t,x)
\]
and the result is clear.
\end{proof}

\begin{proposition}\label{p2}
Under the assumptions of Remark \ref{r1} or Remark \ref{r3}, \eqref{0} has a traveling wave solution
connecting $(0,0)$ with $(k_1,k_2)$ if $c> \max\{2\sqrt{d_1r_1}, 2
\sqrt{d_2r_2}\}$. Moreover, if $d_1\ge d_2, r_1\ge r_2,$ then \eqref{0} has a traveling wave solution
connecting $(0,0)$ with $(k_1,k_2)$ if $c> 2\sqrt{d_1r_1}$.
If $c<2\sqrt{d_1r_1},$ then \eqref{0} has not a traveling wave solution
connecting $(0,0)$ with $(k_1,k_2)$.
\end{proposition}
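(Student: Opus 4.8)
The plan is to treat the two existence assertions and the nonexistence assertion by entirely different mechanisms. The existence statements (the first sentence, and the first half of the second sentence) are consequences of the cooperative-systems theory already invoked in Remarks \ref{r1} and \ref{r3}, namely Lin et al. \cite{llm} (built on Weinberger et al. \cite{weinberger}); I would simply invoke that theory and point out why the stated thresholds arise. The nonexistence statement (last sentence) is the genuinely new piece, and I would derive it by combining the unconditional lower bound of Lemma \ref{12} with the traveling-wave-as-upper-solution comparison already used in Proposition \ref{p1}.

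For existence, the key structural fact is that \eqref{0} is cooperative (quasimonotone), since $b_1,b_2>0$, so the wave-profile system admits a monotone-iteration / upper--lower-solution treatment. Linearizing at the invadable state $(0,0)$ decouples the two equations, and a profile decaying like $e^{\lambda_i s}$ at $-\infty$ requires a real root of the characteristic equation $d_i\lambda^2-c\lambda+r_i=0$, i.e. $c\ge 2\sqrt{d_ir_i}$. Hence when $c>\max\{2\sqrt{d_1r_1},\,2\sqrt{d_2r_2}\}$ both linearized problems furnish admissible exponential upper solutions near $(0,0)$, while $(k_1,k_2)$ (or the $(E_1,E_2)$-type bounds of Lemma \ref{11}) bounds the iteration from above; monotone iteration then converges to a front joining $(0,0)$ to $(k_1,k_2)$, which is exactly the conclusion extracted from \cite{llm}. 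Under the monotonicity hypothesis $d_1\ge d_2,\ r_1\ge r_2$ one has $\max\{2\sqrt{d_1r_1},\,2\sqrt{d_2r_2}\}=2\sqrt{d_1r_1}$, so any $c>2\sqrt{d_1r_1}$ already satisfies $c>2\sqrt{d_2r_2}$ as well; both characteristic equations retain real roots and the same construction applies, giving existence down to $c>2\sqrt{d_1r_1}$.

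For nonexistence I would argue by contradiction. Suppose \eqref{0} carried a front $(\psi_1(x+ct),\psi_2(x+ct))$ connecting $(0,0)$ to $(k_1,k_2)$ with $c<2\sqrt{d_1r_1}$. Choose compactly supported data $0<\phi_1(x)<k_1,\ 0<\phi_2(x)<k_2$. Exactly as in Proposition \ref{p1}, for $\rho$ large the shifted profile $(\psi_1(x+ct+\rho),\psi_2(x+ct+\rho))$ dominates the initial data and, being an entire solution, serves as an upper solution, so $u_1(t,x)\le \psi_1(x+ct+\rho)$. Now fix $c'$ with $c<c'<2\sqrt{d_1r_1}$ and evaluate along $x=-(c'-\eta)t$ with $0<\eta<c'-c$: the argument $x+ct+\rho=(c-c'+\eta)t+\rho\to-\infty$, so $\psi_1\to 0$ there, whereas $|x|=(c'-\eta)t<c't$ with $c'-\eta<2\sqrt{d_1r_1}$, so Lemma \ref{12} forces $u_1(t,x)\ge 1-\delta$ for all large $t$. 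This contradiction rules out such a front. The essential point is that Lemma \ref{12} is unconditional (it only uses $\partial_t u_1\ge d_1\Delta u_1+r_1u_1(1-u_1)$ together with Lemmas \ref{l1.1} and \ref{com}), so the nonexistence needs no relation between $d_1r_1$ and $d_2r_2$.

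The main obstacle is not the nonexistence half, which is a short contradiction once one is careful to pick a single spreading direction ($x<0$) so that the front profile decays while Lemma \ref{12} still applies; rather it is justifying the existence half at the sharp threshold. If one wished to reprove existence directly instead of citing \cite{llm}, the delicate steps would be constructing a lower solution that does not collapse to the trivial state $0$ under the monotone iteration, and controlling the coupling term $b_2u_1$ in the $u_2$-equation so that $u_2$ is dragged along at the (possibly faster) speed set by $u_1$ without violating its own admissible decay rate at $-\infty$. I would therefore lean on \cite{llm} for existence and reserve the detailed work for the nonexistence argument, which is what genuinely uses the spreading estimates developed in this paper.
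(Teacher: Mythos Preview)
Your proposal is correct and follows the same route as the paper: existence is delegated to \cite{llm} via the characteristic polynomials $d_i\lambda^2-c\lambda+r_i$ of the linearization at $(0,0)$, and nonexistence comes from playing the wave-as-upper-solution bound of Proposition~\ref{p1} against the unconditional lower spreading estimate of Lemma~\ref{12} (the paper merely says this is ``clear by Lemmas~\ref{l1.1} and~\ref{12}'' and omits the details you supply). The one refinement worth noting is that the precise hypothesis the paper extracts from \cite[Theorem 5.11]{llm} is not just that both polynomials have real roots but that the root intervals $(\gamma_{11},\gamma_{12})$ and $(\gamma_{21},\gamma_{22})$ overlap; under each of the stated hypotheses one in fact has $(\gamma_{11},\gamma_{12})\subset(\gamma_{21},\gamma_{22})$ because $d_1\ge d_2,\ r_1\ge r_2$ forces $d_1\lambda^2-c\lambda+r_1\ge d_2\lambda^2-c\lambda+r_2$ for $\lambda>0$, so your phrase ``the same construction applies'' is justified once this inclusion is made explicit.
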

\begin{proof}
If $c> \max\{2\sqrt{d_1r_1}, 2 \sqrt{d_2r_2}\}$, then define
$0<\gamma _{i1}<\gamma _{i2}$ by
\begin{equation*}
d_i\gamma _{i1}^2-c\gamma _{i1}+r_i=d_i\gamma _{i2}^2-c\gamma _{i2}+r_i=0%
\text{ for }i=1,2.
\end{equation*}
By Lin et al. \cite[Theorem 5.11]{llm}, the lemma is true if
$(\gamma _{11},\gamma _{12})\bigcap (\gamma_{21},\gamma _{22})$ is
nonempty, which is evident if Remark \ref{r1} or Remark \ref{r3} holds or $d_1\ge d_2$ and $r_1\ge r_2$ are true.

The nonexistence of traveling wave solutions is clear by Lemmas \ref{l1.1} and \ref{12}, and we omit the proof here. The proof is
complete.
\end{proof}

\section{Discussion}
\noindent

In ecological systems, the cooperatitive/symbiotic/mutualistic
communities are very universal.  For example, the role that insects, in
particular bees, have in the fecundation of flowers, see Boucher
\cite{bou}. Furthermore, Malchow et al. \cite[Section 4.3.2]{mal}
also introduced many examples. In population dynamics, the behavior
of many cooperative kinetic systems is very simple: If a
cooperative system admits only one positive equilibrium, then the
equilibrium is asymptotic stable and the others are unstable.
These mathematical results are very easy and can be found in many
textbooks, we also refer to Malchow et al. \cite{mal}, Murray
\cite{murray}. In particular, if $b_1b_2<1$ in \eqref{0}, then $(k_1,k_2)$ is
stable and the phase plane of the corresponding kinetic system is
very clear, see Murray \cite[pp. 101]{murray}. Biologically,
$(k_1,k_2)>(1,1)$ implies that each species has increased its steady
state population from its maximum value in isolation \cite{murray}, which is achieved by inter-specific cooperation.

However, when the spatial-temporal structure is involved in
cooperative systems, e.g., the spatial dispersal of plant and seeds
(see Murray \cite[Section 3.6]{murray}), its dynamical properties
may be very complex since the process often involves the far-from-equilibrium dynamics.
By Liang and Zhao \cite{liang}, Lui \cite{lui1,lui2},
if an irreducible cooperative system  admits two steady states, it is very likely that
different unknown functions have the same asymptotic speed of spreading.
However, Remarks \ref{r1}-\ref{r3} show that the complex propagation modes of evolutionary systems with
multi equilibria since it is necessary to
formulate the asymptotic spreading of different unknown functions by
different indices. Note that the number of steady states is
determined by the nonlinearities,  this certainly indicates the
complex arising from the nonlinearities.

We now consider the linear determinacy problem. Because we consider
the spatial invasion of two species, then one interesting
equilibrium is $(0,0)$ that is invadable. If the asymptotic speeds of spreading are linearly
determinate, then the asymptotic speeds of spreading will be fully  determined by
$d_1,r_1,d_2,r_2,$ which is impossible by Remarks \ref{r1}-\ref{r3}. Therefore, our
results show the effect of inter-specific cooperation from the
following two factors: (1) asymptotic speed of spreading or its lower
bounds of $u_2$ since $c^*> 2\sqrt{d_2r_2}$ in Theorem \ref{th1}; (2)
eventual population densities on the coexistence domain because of
$(k_1,k_2)> (1,1)$.

In this paper, utilizing the theory established by Berestycki
\cite{be}, we obtain some estimates of the asymptotic speeds of spreading, which
partly shows the role of nonlinearity. Unfortunately, only the lower
bounds and upper bounds of asymptotic speeds of spreading are obtained, precise results need
further investigation.

\section*{Acknowledgments}
\noindent

I would like to express my gratitude to the anonymous referee for a careful reading and helpful suggestions which led to an improvement of my original manuscript. This work was supported  by NSF of China (11101094), NSF of Gansu
Province of China (096RJZA051) and Fundamental Research Funds for the Central Universities (lzujbky-2010-67).

\end{document}